\newcommand{\excise}[1]{}
\newtheorem{thm}{Theorem}[section]
\newtheorem{lemma}[thm]{Lemma}
\newtheorem{cor}[thm]{Corollary}
\newtheorem{prop}[thm]{Proposition}
\newtheorem{conj}[thm]{Conjecture}
\newtheorem{prob}[thm]{Problem}
\theoremstyle{definition}
\newtheorem{example}[thm]{Example}
\newtheorem{remark}[thm]{Remark}
\newtheorem{defn}[thm]{Definition}
\numberwithin{equation}{section}
\newcommand{\ring}[1]{\ensuremath{\mathbb{#1}}}
\renewcommand\>{\rangle}
\newcommand\<{\langle}
\newcommand\NN{\ring{N}}
\DeclareMathOperator\image{Im} 
\DeclareMathOperator\lcm{lcm} 
\DeclareMathOperator\Ap{Ap} 
\DeclareMathOperator\supp{supp} 
\DeclareMathOperator\bul{bul} 
\DeclareMathOperator\mbul{mbul} 
\begin{document}

\mbox{}
\title{On the Linearity of $\omega$-primality in~Numerical~Monoids\qquad}
\author{Christopher O'Neill}
\address{Mathematics Department\\Duke University\\Durham, NC 27708}
\email{musicman@math.duke.edu}
\author{Roberto Pelayo}
\address{Mathematics Department\\University of Hawai`i at Hilo\\Hilo, HI 96720}
\email{robertop@hawaii.edu}

\date{\today}

\begin{abstract}
\hspace{-2.05032pt}
In an atomic, cancellative, commutative monoid, the $\omega$-value measures how far an element is from being prime. In numerical monoids, we show that this invariant exhibits eventual quasilinearity (i.e., periodic linearity). We apply this result to describe the asymptotic behavior of the $\omega$-function for a general numerical monoid and give an explicit formula when the monoid has embedding dimension $2$.  
\end{abstract}
\maketitle


\section{Introduction}\label{s:intro}

Recent years have seen an intensive study of the factorization properties in monoids and integral domains.  Numerical monoids (i.e., co-finite, additive submonoids of $\mathbb N$) provide an excellent venue to explore various measurements of non-unique factorization, including elasticity \cite{elasticity}, delta sets \cite{delta}, and catenary degrees \cite{catenarytame}. See \cite{numerical} for precise definitions. In \cite{tame}, Geroldinger and Hassler define the $\omega$-primality of an element in a cancellative, commutative monoid, which measures how far an element is from being prime.  Their definition is provided below.

\begin{defn}\label{d:omega}
Let $M$ be a cancellative, commutative atomtic monoid with set of units $M^{\times}$ and set of irreducibles $\mathcal A(M)$.  For each $x \in M$, define $\omega(x) = m$ if $m$ is the smallest positive integer with the property that whenever $x \mid a_1\cdots a_t,$ with $a_i \in \mathcal A(M)$, there is a $T \subset \{1,2, \ldots, t\}$ with $|T| \leq m$ such that $x \mid \prod_{k \in T} a_k.$  If no such $m$ exists, then set $\omega(x) = \infty$.  For each $x \in M^{\times}$, we define $\omega(x) = 0$.  
\end{defn}

Soon after, Anderson and Chapman developed much of the foundational theory for $\omega$-primality in integral domains \cite{prime, bounding}.  They proved that, when applied to elements in numerical monoids, $\omega$-primality becomes fairly well-behaved and tractable.  Furthermore, in \cite{andalg}, they develop an algorithm to compute $\omega$-values for any numerical monoid.  A variant of this function is now included in the standard \texttt{numericalsgps} package in \texttt{GAP} \cite{numericalsgps}.  This paper applies their algorithm to give a description of values of the $\omega$-function for elements in an embedding dimension $2$ numerical monoid.  In embedding dimension $3$, the authors of \cite{interval} provide a closed form for the $\omega$-function on the generators of a numerical monoid generated by an interval.

In this paper, we develop results on the eventual quasi-linearity of $\omega(n)$, viewed as a function $\omega: \Gamma \to \mathbb N$, for any numerical monoid $\Gamma$.  That is, if $n_1$ is the smallest generator of $\Gamma$, we prove that for large enough $n \in \Gamma$,  $\omega(n) = \frac{1}{n_1}n + a(n)$, where $a(n)$ is periodic with period dividing $n_1$.  In the case where $\Gamma$ has embedding dimension $2$, we apply these results to obtain an explicit formula for $\omega(n)$, which provides a more global description for $\omega$-values for large $n$ than the one given in \cite{andalg}.  

In Section~\ref{s:background}, we provide the reader with background and definitions related to $\omega$-primality.  In Section~\ref{s:quasi}, we prove the main result of our paper (Theorem~\ref{t:quasi}) using the concept of a cover map (Definition~\ref{d:covermap}), and use it to provide an asymptotic description of the $\omega$-function (Corollary~\ref{c:growth}).  In Section~\ref{s:2gen}, we provide an explicit form for $\omega(n)$ for $n$ sufficiently large when $\Gamma$ has embedding dimension 2 (Theorem~\ref{t:2omega}).  Lastly, in Section~\ref{s:futurework}, we give some open questions, including possible connections to Hilbert functions in combinatorial commutative algebra (Problem~\ref{p:hilbert}).  


\section{Background}\label{s:background}

In this section, we will build the basic machinery to compute $\omega$-primality for elements of numerical monoids.  In what follows, $\NN$ denotes the set of non-negative integers.

\begin{defn}\label{d:factor}
Let $\Gamma$ be a numerical monoid with minimal generating set $\{n_1, \ldots, n_k\}$, 
and fix $n \in \Gamma$.  A vector $\vec a = (a_1, a_2, \ldots, a_k) \in \NN^k$ is a \emph{factorization} of $n$ if $n = \sum_{i=1}^k a_in_i$.  
The \emph{length} of the factorization $\vec a$, denoted $|\vec a|$, is given by $\sum_{i=1}^k a_i$.  
The \emph{support} of a factorization $\vec a$, denoted $\supp(\vec a)$, is given by $\supp(\vec a) = \{n_j : a_j > 0\}$.  
\end{defn}


For a numerical monoid $\Gamma$, we provide a more pertinent definition 
of $\omega$-primality in terms of the minimal generators, 
using the additive structure of $\Gamma$ as a submonoid of~$\NN$.  

\begin{defn}\label{d:omega_num}
Let $\Gamma = \langle n_1, \ldots, n_k\rangle$ be a numerical monoid with irreducible elements $n_1, \ldots, n_k$.  For $n \in \Gamma$, define $\omega(n) = m$ if $m$ is the smallest positive integer with the property that whenever $\vec a \in \NN^k$ satisfies $\sum_{i=1}^k a_in_i - n \in \Gamma$ with $|\vec a| > m$, there exist a $\vec b \in \NN^k$ with $b_i \le a_i$ for each $i$ such that $\sum_{i=1}^k  b_i n_i - n \in \Gamma$ and $|\vec b| \le m$.  
\end{defn}

The notion of a bullet was first introduced in \cite{interval}.  
Throughout this paper, we use bullets extensively to study the $\omega$-function of numerical monoids.  
Definition~\ref{d:bullet} gives the definition of a bullet, 
and Lemma~\ref{l:bullet} justifies using bullets to study the $\omega$-function.  

\begin{defn}\label{d:bullet}
Fix a numerical monoid $\Gamma = \<n_1, \ldots, n_k\>$ and $n \in \Gamma$.  We say $\vec a \in \NN^k$ is a \emph{bullet} for $n$ if $(\sum_{i=1}^k a_in_i) - n \in \Gamma$ and $(\sum_{i=1}^k a_in_i - n_j) - n \notin \Gamma$ whenever $a_j > 0$.  A bullet $\vec a$ for $n$ is \emph{maximal} if $|\vec a| = \omega(n)$.  
Let $\bul(n)$ (resp. $\mbul(n)$) denote the set of bullets (resp. maximal bullets) of $n$, and let 
$$\bul_j(n) = \{\vec a \in \bul(n) : n_j \in \supp(\vec a)\} \subset \bul(n)$$
for $j \le k$.  
\end{defn}

\begin{lemma}\label{l:bullet}
There exists a maximal bullet for each $n \in \Gamma$.  In particular, $\omega(n) < \infty$ for all $n \in \Gamma$.
\end{lemma}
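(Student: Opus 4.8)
The plan is to realize $\bul(n)$ as the set of minimal elements of a single explicit up-set in $\NN^k$, deduce its finiteness and nonemptiness from that, and then compute $\omega(n)$ as the largest length occurring among bullets. Fix $n\in\Gamma$ and set $W(n)=\{\vec a\in\NN^k:(\sum_{i=1}^k a_in_i)-n\in\Gamma\}$, the set of vectors that ``work'' for $n$ in the sense of Definition~\ref{d:omega_num}. I would first record two elementary facts: $W(n)$ is nonempty, since $n\in\Gamma$ has a factorization $\vec a$ and then $\sum_i a_in_i-n=0\in\Gamma$; and $W(n)$ is an up-set for the coordinatewise partial order, since $\vec a\in W(n)$ and $\vec a'\ge\vec a$ give $\sum_i a_i'n_i-n=(\sum_i a_in_i-n)+\sum_i(a_i'-a_i)n_i\in\Gamma$. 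Unwinding Definition~\ref{d:bullet}, $\vec a$ is a bullet for $n$ exactly when $\vec a\in W(n)$ and $\vec a-\vec e_j\notin W(n)$ for every $j$ with $a_j>0$; since $W(n)$ is an up-set, this is precisely the condition that $\vec a$ be a minimal element of $W(n)$.

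Next I would bound bullets. Since $\Gamma$ is cofinite, $\NN\setminus\Gamma$ is finite, bounded above by the Frobenius number $F$ of $\Gamma$; so if $\vec a$ is a bullet and $a_j>0$, the requirement $\sum_i a_in_i-n-n_j\notin\Gamma$ forces $\sum_i a_in_i-n-n_j\le F$, which bounds $\sum_i a_in_i$ and therefore $|\vec a|\le\sum_i a_in_i$, uniformly in $\vec a$. Thus $\bul(n)$ is finite. (Alternatively, Dickson's lemma says $W(n)$ has only finitely many minimal elements.) And $\bul(n)$ is nonempty because any $\vec a\in W(n)$ reduces to a bullet: as long as $\vec a$ is not a bullet, some $j$ with $a_j>0$ has $\vec a-\vec e_j\in W(n)$, and iterating this strictly decreases $|\vec a|$, so the process terminates at a bullet $\vec b\le\vec a$.

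Finally, let $M=\max\{|\vec a|:\vec a\in\bul(n)\}$, which is finite by the previous step (and positive when $n>0$, since $\vec 0\notin W(n)$ then). I claim $\omega(n)=M$. For $\omega(n)\le M$: given $\vec a\in W(n)$, reduce it to a bullet $\vec b\le\vec a$ with $|\vec b|\le M$, which verifies the defining property of $\omega$ at level $M$. For $\omega(n)\ge M$: if some bullet $\vec a$ had $|\vec a|=M>\omega(n)$, then applying the defining property of $\omega$ at level $\omega(n)$ to $\vec a$ would produce $\vec b\le\vec a$ in $W(n)$ with $|\vec b|\le\omega(n)<|\vec a|$, so $\vec b<\vec a$ would lie in $W(n)$ strictly below the minimal element $\vec a$ --- a contradiction. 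Hence $\omega(n)=M<\infty$, and any bullet of length $M$ is a maximal bullet; the trivial case $n=0$ is handled by the unit convention of Definition~\ref{d:omega}. The one point needing genuine care is this last identification of the somewhat indirect ``smallest positive integer such that\dots'' formulation of $\omega(n)$ with the extremal quantity $M$; everything else is routine manipulation of the coordinatewise order.
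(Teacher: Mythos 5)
Your proof is correct, and while it shares the paper's basic strategy of bounding the length of any bullet for $n$, the mechanism is genuinely different and worth comparing. The paper fixes a factorization $\vec a$ of $n$ and asserts that any $\vec b$ with $|\vec b| \ge |\vec a|\cdot\sum_i n_i$ fails to be a bullet, justified by the one-line remark that $b_j > a_j$ for some $j$ forces $\sum_i b_in_i - n_j - n \in \Gamma$; as literally stated that implication needs $\vec b \ge \vec a$ coordinatewise (or a quantitative size argument), not merely $b_j > a_j$ in one coordinate, so the reader must fill a small gap. Your route --- realizing $\bul(n)$ as the set of minimal elements of the up-set $W(n)$ and bounding a bullet via cofiniteness, since $a_j>0$ forces $\sum_i a_in_i - n - n_j$ to be at most the Frobenius number --- closes exactly that gap and gives a cleaner, airtight bound (with Dickson's lemma as a structural alternative). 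You also prove in full the identity $\omega(n)=\max\{|\vec a| : \vec a\in\bul(n)\}$, which the paper compresses inside this lemma into the remark that the absence of a bullet of length $\omega(n)$ would contradict minimality, and then records separately as Proposition~\ref{p:omega}; your two-sided argument using minimality in $W(n)$ is precisely the justification both of those statements need. The cost is length; the benefit is that Lemma~\ref{l:bullet} and Proposition~\ref{p:omega} both fall out of one transparent argument, with the $n=0$ and positivity edge cases handled explicitly.
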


\begin{proof}
It suffices to show that $\omega(n)$ is finite.  
Indeed, if there were no bullets for $n$ of length $\omega(n)$, 
it would contradict the minimality of $\omega(n)$.  

Fix a factorization $\vec a$ for $n$, 
and fix $\vec b$ with 
$|\vec b| \ge |\vec a| \cdot \sum_{i = 1}^k n_i.$
We must have $b_j > a_j$ for some $j$, 
so if $\sum_{i=1}^k b_in_i - n \in \Gamma$, 
we have $\sum_{i=1}^k b_in_i - n_j - n \in \Gamma$,
meaning $\vec b$ is not a bullet for $n$.  
Thus, $\omega(n) < |\vec a| \cdot \sum_{i = 1}^k n_i$.  
\end{proof}

The main result of this paper is Theorem~\ref{t:quasi}, which states that 
for any numerical monoid $\Gamma$, $\omega(n)$ is quasilinear 
for sufficiently large values of $n$.  
For completeness, we include the following definition here.  
For a full treatment, see \cite{ec}.  

\begin{defn}
A map $f:\NN \to \NN$ is a \emph{quasipolynomial}
if $f(x) = a_d(x)x^d + \cdots + a_0(x)$, 
where each $a_i(x)$ is periodic with period $s_i$ and $a_d$ is not identically $0$.  
We call $d$ the \emph{degree} of $f$, and $s = \lcm(s_0, \ldots, s_d)$ the \emph{period} of $f$.  
A quasipolynomial $f$ is \emph{quasilinear} if it has degree 1.  
A function $g:\NN \to \NN$ is \emph{eventually quasilinear} 
if there exists a quasilinear function $f$ and $N \in \NN$ such that
$g(n) = f(n)$ for all $n > N$.  
\end{defn}

\section{Main Result}\label{s:quasi}

In this section, we begin by explicitly stating the relationship between bullets for $n \in \Gamma$ and the value of $\omega(n)$ (Proposition~\ref{p:omega}) and introducing the notion of cover maps (Definition~\ref{d:covermap}).  We then apply this to prove the eventual quasilinearity of the $\omega$-function for any numerical monoid (Theorem~\ref{t:quasi}).  We conclude the section by providing an asymptotic description of the $\omega$-function (Corollary~\ref{c:growth}).

\begin{example}\label{e:3gen}
Let $\Gamma = \<6,9,20\>$.  Below is a plot of the values of the 
$\omega$-function for $\<6,9,20\>$ (left) and $\<11,13,15\>$ (right).  
Theorem~\ref{t:quasi} states that for large values, 
these graphs will look like a collection of discrete lines 
of slope $\frac{1}{6}$ and $\frac{1}{11}$, respectively.  

\begin{center}
\includegraphics[width=2.5in]{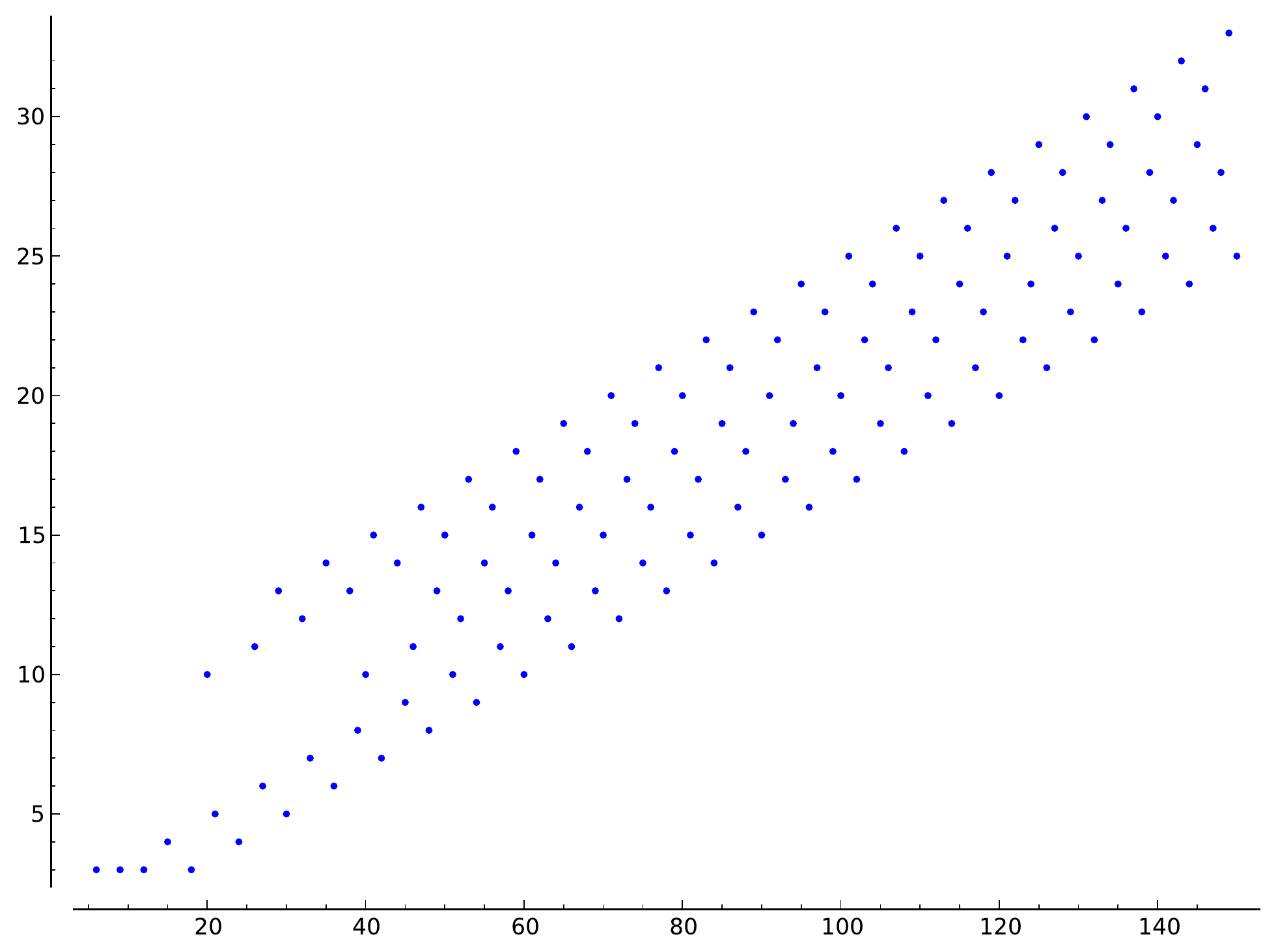}
\hspace{0.5in}
\includegraphics[width=2.5in]{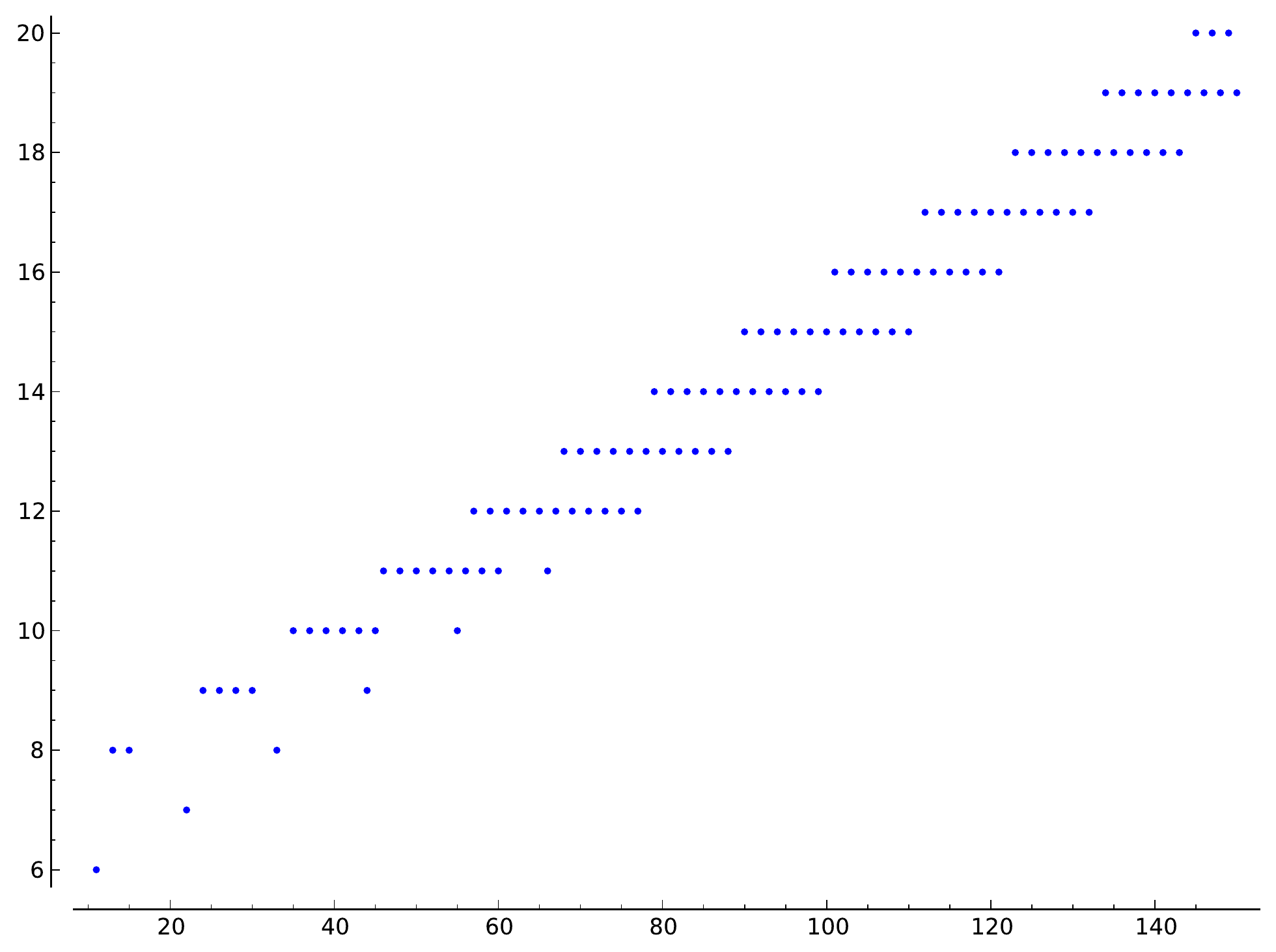}
\end{center}
\end{example}

\begin{prop}\label{p:omega}
For a numerical monoid $\Gamma$, $$\omega(n) = \max\{|\vec a| : \vec a \in \bul(n)\}$$ for all $n \in \Gamma$.  
\end{prop}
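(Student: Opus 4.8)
The plan is to show the two inequalities $\omega(n) \geq \max\{|\vec a| : \vec a \in \bul(n)\}$ and $\omega(n) \leq \max\{|\vec a| : \vec a \in \bul(n)\}$ separately, working directly from Definition~\ref{d:omega_num} and Definition~\ref{d:bullet}. Let me write $B = \max\{|\vec a| : \vec a \in \bul(n)\}$, which is finite by Lemma~\ref{l:bullet} (indeed a maximal bullet exists). First I would observe the basic fact that any $\vec a \in \NN^k$ with $\sum_i a_i n_i - n \in \Gamma$ \emph{dominates} some bullet $\vec b$ for $n$, i.e.\ there is $\vec b \leq \vec a$ componentwise with $\vec b \in \bul(n)$: one simply decrements coordinates of $\vec a$ one at a time, each time checking whether $\sum_i a_i n_i - n_j - n \in \Gamma$ for some $j$ with $a_j > 0$, and stopping when no further decrement keeps the difference in $\Gamma$; this process terminates since $|\vec a|$ strictly decreases.

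For $\omega(n) \geq B$: let $\vec a$ be a maximal bullet, so $|\vec a| = B$ and $\sum_i a_i n_i - n \in \Gamma$. I claim no $\vec b$ with $\vec b \leq \vec a$, $\vec b \neq \vec a$, satisfies $\sum_i b_i n_i - n \in \Gamma$ — because if $b_j < a_j$ for some $j$, then $\sum_i b_i n_i - n \leq \sum_i a_i n_i - n_j - n$, and the latter is not in $\Gamma$ by the bullet condition; since $\Gamma$ is an additive submonoid of $\NN$, subtracting more (namely $(a_j - 1 - b_j)n_j + \sum_{i \neq j}(a_i - b_i)n_i \geq 0$) cannot bring it back into $\Gamma$ either, unless that total subtraction is zero — but it is at least $n_j > 0$. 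Hence the only $\vec b \leq \vec a$ with $\sum_i b_i n_i - n \in \Gamma$ is $\vec a$ itself, with $|\vec a| = B$. Now suppose for contradiction that $\omega(n) = m < B$. Applying Definition~\ref{d:omega_num} to this very $\vec a$ (which has $|\vec a| = B > m$), there must exist $\vec b \leq \vec a$ with $\sum_i b_i n_i - n \in \Gamma$ and $|\vec b| \leq m < B = |\vec a|$, forcing $\vec b \neq \vec a$ — contradiction. Therefore $\omega(n) \geq B$.

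For $\omega(n) \leq B$: I must check that $m = B$ satisfies the defining property in Definition~\ref{d:omega_num}. So take any $\vec a \in \NN^k$ with $\sum_i a_i n_i - n \in \Gamma$ and $|\vec a| > B$. By the domination fact above, there is a bullet $\vec b \leq \vec a$ for $n$; then $\sum_i b_i n_i - n \in \Gamma$ by definition of bullet, and $|\vec b| \leq B = m$ since $B$ is the maximum bullet length. This is exactly what Definition~\ref{d:omega_num} demands, so $\omega(n) \leq B$ (one also notes $B \geq 1$, since any factorization of $n$ dominates a nonzero bullet, so $B$ is a legitimate positive integer). Combining the two inequalities gives $\omega(n) = B$.

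The only genuinely delicate point is the domination/decrement argument and the monotonicity claim "subtracting a nonnegative amount from something not in $\Gamma$ keeps it outside $\Gamma$," which is false in general but is being used here only in the specific form: if $s \notin \Gamma$ and $s - t \in \NN$ with $s - t < s$... — actually this is \emph{also} false for arbitrary $\Gamma$, so I expect the main obstacle is to phrase the first inequality correctly. The clean fix is to not use monotonicity at all: instead, directly take $\vec b$ to be a bullet dominated by $\vec a$ (maximal bullet), and argue that since $\vec a$ is itself a bullet, the decrement process started at $\vec a$ halts immediately, so $\vec a$ is the unique bullet $\leq \vec a$; then the contradiction with $\omega(n) = m < B$ goes through as above. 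I would double-check that the bullet condition genuinely prevents any strict sub-vector from having difference in $\Gamma$ — and it does, because for a bullet $\vec a$, removing even a single generator $n_j$ (with $a_j>0$) leaves $\Gamma$, and any strict sub-vector removes at least one such generator's worth and possibly lands on a value that happens to be in $\Gamma$; so I actually \emph{do} need a small argument here, which is cleanest via the explicit decrement procedure rather than a naive monotonicity appeal.
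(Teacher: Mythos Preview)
Your approach is the paper's: show $\omega(n)\ge|\vec a|$ for every bullet $\vec a$, and show (via the decrement/domination observation) that some bullet achieves length $\omega(n)$. The paper's own proof is two sentences and leaves exactly these details to the reader, invoking Lemma~\ref{l:bullet} for the existence of a bullet of length $\omega(n)$.

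Your self-doubt in the final paragraph is misplaced: your original argument for the inequality $\omega(n)\ge B$ is already correct, and the attempted ``fixes'' are unnecessary. You are not subtracting an arbitrary nonnegative integer from $\sum_i a_in_i - n_j - n$; you are subtracting
\[
g \;=\; (a_j-1-b_j)n_j + \sum_{i\ne j}(a_i-b_i)n_i,
\]
which is a nonnegative combination of the generators and hence lies in $\Gamma$. If $\sum_i b_in_i - n = (\sum_i a_in_i - n_j - n) - g$ were in $\Gamma$, then adding $g\in\Gamma$ back would force $\sum_i a_in_i - n_j - n\in\Gamma$, contradicting the bullet condition on $\vec a$. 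So the claim ``no strict $\vec b\le\vec a$ has $\sum_i b_in_i - n\in\Gamma$'' is valid exactly as you first wrote it, and the contradiction with $\omega(n)<B$ goes through without further repair. The general slogan ``subtracting a nonnegative amount from something outside $\Gamma$ stays outside $\Gamma$'' is indeed false, but the special case ``subtracting an element of $\Gamma$ from something outside $\Gamma$ stays outside $\Gamma$'' is just the contrapositive of closure under addition.
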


\begin{proof}
For any bullet $\vec a \in \bul(n)$, $\omega(n) \ge |\vec a|$, 
with equality exactly when $\vec a$ is a maximal bullet for $n$.  
Lemma~\ref{l:bullet} completes the proof.  
\end{proof}

\begin{remark}
Proposition~\ref{p:omega}, together with Lemma~\ref{l:quasi}b, 
gives an algorithm for computing $\omega(n)$.  
In particular, Lemma~\ref{l:quasi}b shows that 
every $n$ has a bullet with support $\{n_j\}$ for each $j$, say with length $b_j$.  
This means every other bullet $\vec a \in \bul(n)$ must satisfy
$a_j < b_j$.  This gives a bounded region in which to find $\bul(n)$, 
which by Proposition~\ref{p:omega} allows us to compute $\omega(n)$.  
This is essentially the algorithm given in \cite[The Omega Algorithm]{andalg}.  
\end{remark}

\begin{defn}\label{d:covermap}
Let $\Gamma = \<n_1, \ldots, n_k\>$ be a numerical monoid.  
For $n \in \Gamma$ and $a \in \NN$, the \emph{$j$-th cover map} 
is the map $\bul_j(n) \to \bul(n + an_j)$ given by 
$\phi_j(a_1, \ldots, a_k) = (a_1, \ldots, a_j + a, \ldots, a_k)$.  
\end{defn}

Each cover map $\bul_j(n) \to \bul(n + an_j)$ in Definition~\ref{d:covermap} 
is injective, and increases the length of a given factorization by $a$.  
By Proposition~\ref{p:omega}, this shows $\omega(n) + 1$ 
is a lower bound for $\omega(n + n_j)$ when $\bul_j(n) \neq \varnothing $.

Cover maps are, in many ways, the key to the proof of Theorem~\ref{t:quasi}, 
as they allow us to express the bullets that occur for large elements of $\Gamma$
in terms of the bullets of smaller elements in $\Gamma$.  

\begin{lemma}\label{l:quasi}
Suppose $\Gamma = \<n_1, \ldots, n_k\>$ is a minimally generated numerical monoid.  
\begin{enumerate}
\item[(a)] 
Fix $i < j$, and fix $\vec a \in \bul(n)$ with $a_i, a_j > 0$.  
Fix an $m \in \NN$ divisible by $n_i$ and $n_j$, and let
$\phi_i:\bul_i(n) \to \bul(n + mn_i)$ and $\phi_j:\bul_j(n) \to \bul(n + mn_j)$
denote the cover maps.  
We have $|\phi_i(\vec a)| \ge |\phi_j(\vec a)|$
with equality if and only if $n_i = n_j$.  

\item[(b)] 
For each $j \le k$, there exists a bullet $\vec b \in \bul(n)$ for $n$ with $\supp(\vec b) = \{n_j\}$.  

\end{enumerate}
\end{lemma}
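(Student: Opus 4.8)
\medskip

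The plan is to prove part (b) first (it is needed for the algorithm and is used implicitly in part (a)), then handle part (a) by a direct comparison of the two lengths.

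\textbf{Part (b).} Fix $j \le k$. I want a bullet supported on $\{n_j\}$ alone, i.e. a multiple $c\,n_j$ such that $(c n_j) - n \in \Gamma$ but $(c-1)n_j - n \notin \Gamma$. The idea is to take $c$ as small as possible: let $c$ be the least non-negative integer with $c n_j - n \in \Gamma$. Such a $c$ exists because $\Gamma$ is cofinite in $\NN$, so for $c$ large enough $c n_j - n$ lands in $\Gamma$; indeed $c n_j - n \ge 0$ and large forces membership. Now set $\vec b = c\,e_j$ (the vector with $c$ in coordinate $j$, zero elsewhere). By minimality of $c$, we have $(c-1)n_j - n \notin \Gamma$, which says exactly that $\sum_i b_i n_i - n_j - n \notin \Gamma$; since $b_i > 0$ only for $i = j$, the bullet condition is verified. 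We should note $c > 0$: if $c = 0$ then $-n \in \Gamma$, forcing $n = 0$, and the zero vector is indeed a bullet for $0$ — so the statement holds trivially in that degenerate case, and for $n > 0$ we get $c \ge 1$ and $\supp(\vec b) = \{n_j\}$ as required.

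\textbf{Part (a).} Write $N = \sum_i a_i n_i$, so $N - n \in \Gamma$. Applying $\phi_i$ adds $m/n_i$ copies of $n_i$ (since $\phi_i$ maps into $\bul(n + m n_i)$, wait — re-reading Definition~\ref{d:covermap}, the cover map to $\bul(n + a n_j)$ adds $a$ to coordinate $j$; here the target is $\bul(n + m n_i)$, so $\phi_i$ adds $m$ to coordinate $i$). Hence $|\phi_i(\vec a)| = |\vec a| + m$ and $|\phi_j(\vec a)| = |\vec a| + m$ as \emph{lengths of vectors} — so that cannot be the intended reading. The correct reading must be that $\phi_i : \bul_i(n) \to \bul(n + m n_i)$ is the cover map with parameter $a = m/n_i$ (so that it genuinely lands in $\bul(n + m n_i)$, adding $m/n_i$ to coordinate $i$), and likewise $\phi_j$ adds $m/n_j$ to coordinate $j$; this is why $m$ is required divisible by both $n_i$ and $n_j$. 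Under this reading, $|\phi_i(\vec a)| = |\vec a| + m/n_i$ and $|\phi_j(\vec a)| = |\vec a| + m/n_j$. Since $i < j$ and the generators are listed in increasing order, $n_i \le n_j$, hence $m/n_i \ge m/n_j$, giving $|\phi_i(\vec a)| \ge |\phi_j(\vec a)|$, with equality exactly when $n_i = n_j$. The one thing left to check is that $\phi_i(\vec a)$ and $\phi_j(\vec a)$ really are bullets, i.e. that the cover maps are well-defined on these inputs; this is exactly the injectivity/well-definedness remark following Definition~\ref{d:covermap}, applied to $\vec a \in \bul_i(n)$ and $\vec a \in \bul_j(n)$ (legitimate since $a_i, a_j > 0$).

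\textbf{Main obstacle.} The only genuine subtlety is the bookkeeping in part (a): pinning down that the relevant quantity being compared is the \emph{length} $|\phi_i(\vec a)|$, that a cover map with multiplicative shift $m/n_i$ increases length by exactly $m/n_i$, and that the divisibility hypothesis on $m$ is precisely what makes both shifts integral. Once the definitions are unwound correctly, the inequality is immediate from $n_i \le n_j$. Part (b) is routine given cofiniteness of $\Gamma$.
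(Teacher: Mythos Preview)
Your proposal is correct and matches the paper's proof essentially line for line: part~(b) is the minimal-multiple argument (take the least $c$ with $cn_j - n \in \Gamma$), and part~(a) is the one-line comparison $|\phi_i(\vec a)| = |\vec a| + m/n_i \ge |\vec a| + m/n_j = |\phi_j(\vec a)|$. You also correctly diagnosed the typo in the statement---the intended targets are $\bul(n+m)$, not $\bul(n+mn_i)$ and $\bul(n+mn_j)$, so that the cover-map parameters are $m/n_i$ and $m/n_j$ (this is exactly how the maps are used in the proof of Theorem~\ref{t:quasi}); note that under this reading your parenthetical ``so that it genuinely lands in $\bul(n+mn_i)$'' should read $\bul(n+m)$.
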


\begin{proof}\text{ }
\begin{enumerate}
\item[(a)] 
Since $n_i \le n_j$, we see $|\phi_i(\vec a)| = |\vec a| + \frac{m}{n_i} \ge |\vec a| + \frac{m}{n_j} = |\phi_j(\vec a)|$.  

\item[(b)] 
When $b_j$ is sufficiently large, $b_jn_j - n \in \Gamma$.  
Letting $b_j$ be the minimal such value, we get $\vec b \in \bul(n)$.  
\end{enumerate}
\end{proof}

We are now ready to state and prove the main result.  

\begin{thm}\label{t:quasi}
Let $\Gamma = \<n_1, \ldots, n_k\>$ be a numerical monoid.  
For $n$ sufficiently large, $\omega(n)$ is quasilinear with period dividing $n_1$.  
In particular, there exists an explicit $N_0$ such that $\omega(n + n_1) = \omega(n) + 1$ for $n > N_0$.  
\end{thm}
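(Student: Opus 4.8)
The plan is to prove the key recurrence $\omega(n+n_1)=\omega(n)+1$ for all sufficiently large $n$, from which quasilinearity with period dividing $n_1$ follows immediately: iterating gives $\omega(n+rn_1)=\omega(n)+r$, so on each residue class mod $n_1$ the function $\omega$ is eventually linear with slope $\tfrac1{n_1}$, and assembling the $n_1$ classes yields a quasilinear function $f(n)=\tfrac1{n_1}n+a(n)$ with $a$ periodic of period dividing $n_1$. The inequality $\omega(n+n_1)\ge\omega(n)+1$ is essentially free: by Lemma~\ref{l:quasi}(b) the set $\bul_1(n)$ is nonempty (it contains a bullet supported on $\{n_1\}$, indeed the maximal bullet can be taken in $\bul_1(n)$ after we establish the main claim, but for the lower bound any bullet containing $n_1$ in its support suffices—and in fact a maximal bullet always does, see below), so the $1$st cover map $\phi_1:\bul_1(n)\to\bul(n+n_1)$ is injective and raises length by $1$, giving $\omega(n+n_1)\ge\omega(n)+1$ via Proposition~\ref{p:omega} provided a maximal bullet of $n$ lies in $\bul_1(n)$.

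The real content is the reverse inequality $\omega(n+n_1)\le\omega(n)+1$ for $n$ large, equivalently: every maximal bullet of $n+n_1$ has length at most $\omega(n)+1$. The strategy is to take a maximal bullet $\vec c\in\mbul(n+n_1)$ and show that, when $n$ is large, $c_1>0$, so that subtracting one copy of $n_1$ — i.e.\ applying the inverse of $\phi_1$ — produces an element of $\bul_1(n)$ of length $|\vec c|-1$, whence $\omega(n)\ge|\vec c|-1=\omega(n+n_1)-1$. So everything reduces to two structural facts about bullets that I would isolate as preliminary claims: (i) for every $n$, every maximal bullet of $n$ has $n_1$ in its support (here Lemma~\ref{l:quasi}(a) is the tool: if $\vec a$ is a maximal bullet with $a_i,a_j>0$ for some $i\neq j$ with $n_i\neq n_1$, then one checks via the cover-map length comparison that we could not have had maximality unless the "heaviest slope" generator $n_1$ already appears — more precisely, Lemma~\ref{l:quasi}(a) shows pushing forward along $n_1$ beats pushing forward along any other generator, and combined with a pruning argument this forces $n_1\in\supp(\vec a)$ once $n$ is large enough that the bullet is "long"); and (ii) a uniform bound: there is an explicit $N_0$, expressible in terms of the $n_i$ (the bound $|\vec a|\cdot\sum n_i$ from Lemma~\ref{l:bullet}, the Frobenius number, and the largest generator all enter), beyond which every maximal bullet of $n$ is long enough for the support argument in (i) to apply.

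I expect step (i) — showing $n_1$ lies in the support of every maximal bullet of a large element — to be the main obstacle, and the place where the hypothesis "$n$ sufficiently large" is genuinely used. The subtlety is that Lemma~\ref{l:quasi}(a) compares $\phi_i(\vec a)$ and $\phi_j(\vec a)$ only when $\vec a$ already has \emph{both} $a_i,a_j>0$; to bootstrap from "$\vec a$ is a maximal bullet" to "$n_1\in\supp(\vec a)$" I will need to argue that a maximal bullet of a large $n$ cannot be supported on a single generator $n_j$ with $j>1$ (otherwise, crossing over to a bullet supported on $n_1$ via the relation $n_j n_1 = n_1 n_j$ and re-pruning strictly increases the length, contradicting maximality), and cannot be supported away from $n_1$ on several generators either (iterate the pairwise comparison). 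Once (i) is in hand, the inductive step $\phi_1:\bul_1(n)\twoheadrightarrow$-onto-the-maximal-bullets-of-$(n+n_1)$ closes the argument, and tracking the bounds through (ii) yields the explicit $N_0$ promised in the statement. The remaining bookkeeping — that $a(n)$ takes values in $\NN$ and that $f$ is a genuine quasilinear quasipolynomial in the sense of the definition — is routine.
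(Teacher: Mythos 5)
Your high-level architecture matches the paper's: reduce everything to showing that, for $n$ large, a maximal bullet of $n$ (and of $n+n_1$) contains $n_1$ in its support, then transfer lengths in both directions along the injective, length-increasing cover map $\phi_1$. Two small remarks on the reduction itself: you only need \emph{some} maximal bullet of $n+n_1$ with $c_1>0$ (one suffices to get $\omega(n)\ge\omega(n+n_1)-1$), so your claim (i) in the form ``\emph{every} maximal bullet has $n_1$ in its support'' is stronger than necessary and stronger than what the paper establishes (Corollary~\ref{c:bul1} is an existence statement); and your observation that decreasing the first coordinate of a bullet $\vec c\in\bul(n+n_1)$ with $c_1>0$ by one yields a bullet of $n$ is correct.

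The gap is in your proof of (i), which is the entire content of the theorem. The ``crossing over via $n_jn_1=n_1n_j$ and re-pruning'' argument only works when the maximal bullet is supported on a single generator $n_j$: there one can compare the minimal $u$ with $un_1-n\in\Gamma$ against the minimal $a_j$ with $a_jn_j-n\in\Gamma$ using the Frobenius number, and $u>a_j$ once $n$ is large. For a maximal bullet supported on several generators other than $n_1$, trading $n_1$ copies of $n_j$ for $n_j$ copies of $n_1$ produces a longer vector that still dominates $n$, but not a bullet; re-pruning it down to a bullet can shrink its length by an uncontrolled amount, so no contradiction with maximality follows. Nor does ``iterating the pairwise comparison'' via Lemma~\ref{l:quasi}(a) help: that lemma compares the two images $\phi_i(\vec a)$ and $\phi_j(\vec a)$ of a \emph{single} bullet under two different cover maps, and says nothing about comparing two \emph{different} bullets of the same element, which is what you need here. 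The paper closes exactly this gap with a device absent from your proposal: every bullet of a large $n$ is pulled back, via iterated cover maps $\psi_j$, to a bullet of one of the finitely many elements $n_0+r$ with $0\le r<m=\lcm(n_1,\ldots,n_k)$, where the length discrepancy between any two bullets is bounded by an explicit constant $c$; since each application of the $n_1$-cover map gains at least one unit of length over any other cover map, after $q\ge c$ iterations the bullet passing through $n_1$ overtakes every competitor. This construction is also what yields the explicit bound $N_0=n_0+cm$, which your step (ii) promises but never produces. As written, the proposal correctly identifies the crux but does not prove it.
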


\begin{proof}
Let $m = \lcm(n_1, \ldots, n_k)$, and let $n_0 = m \cdot \sum_{i=1}^k n_i$.  For $0 \le r < m$, let
$$c_r = \max\{|\vec b| - |\vec a| : \vec a, \vec b \in \bul(n_0 + r)\}$$
and let $c = \max\{c_r : 0 \leq r < m\}$.   Let $N_0 = n_0 + cm$.   We will show that
$\omega(n + n_1) = \omega(n) + 1$
for all $n > N_0$.  

Fix $n \in \Gamma$ with $n > n_0 + cm$, and write $n - n_0 = qm + r$ with $0 \le r < m$.  
Let $\phi_j:\bul_j(n) \to \bul(n + m)$ and $\psi_j:\bul(n_0 + r) \to \bul(n)$ 
denote the maps given by 
$$\phi_j(a_1, \ldots, a_k) = (a_1, \ldots, a_j + \tfrac{m}{n_j}, \ldots, a_k) \text{ and}$$ 
$$\psi_j(a_1, \ldots, a_k) = (a_1, \ldots, a_j + q \cdot \tfrac{m}{n_j}, \ldots, a_k)$$
respectively.  Notice that since $n \ge n_0$, each $(a_1, \ldots, a_k) \in \bul(n + m)$ has $a_j > m$ for some $j$, 
so 
$$\bigcup_{j=1}^k \image(\phi_j) = \bul(n + m)$$
that is, the union of the images of the cover maps $\phi_j$ cover $\bul(n + m)$.  
Also, since each $\psi_j$ is a composition of such cover maps, 
the images of the cover maps $\psi_j$ cover $\bul(n)$ as well.  

We first show there exists a maximal bullet for $n$ with nonzero first coordinate.  
Since every bullet in $\bul(n)$ is in the image of some $\psi_j$, 
fix $\vec a \in \bul(n_0 + r)$ with $a_1 = 0$ and some $a_j > 0$.  
By Lemma~\ref{l:quasi}b, there exists a bullet $\vec b \in \bul(n_0 + r)$ with $b_1 > 0$.  
By assumption we have $|\vec a| \le |\vec b| + c_r$.  
Each $\psi_i$ is the composition of cover maps
$$\bul_i(n_0 + r) \to \bul_i(n_0 + r + m) \to \cdots \to \bul_i(n_0 + r + qm).$$
Since $n_1 < n_j$, the images of $\vec a$ and $\vec b$ in $\bul(n_0 + r + \ell m)$ 
differ in length by at most $c_r - \ell$.  
Since $q \ge c_r$, this gives $|\psi_1(\vec b)| \ge |\psi_j(\vec a)|$.  

Next, we will show that $\omega(n + m) = \omega(n) + \frac{m}{n_1}$.  
Fix a maximal bullet $\vec b \in \bul(n)$ with $b_1 > 0$.  
We claim $\phi_1(\vec b) \in \bul(n + m)$ is maximal as well.  
Every bullet for $n + m$ is the image of some $\phi_j$.  
By Lemma~\ref{l:quasi}a, $\phi_1$ gives the largest increase in length, 
so since $\vec b$ has maximal length in $\bul(n)$, it follows that $\phi_1(\vec b)$
has maximal length in $\bul(n + m)$.  This yields 
$$\omega(n + m) = |\phi_1(\vec b)| = |\vec b| + \tfrac{m}{n_1} = \omega(n) + \tfrac{m}{n_1}$$
as desired.  

Finally, we will show that $\omega(n + n_1) = \omega(n) + 1$.  
Notice the map $\phi_1$ is given by the composition
$$\bul(n) \to \bul(n + n_1) \to \cdots \to \bul(n + \frac{m}{n_1}n_1)$$
and each map is injective.  By Lemma~\ref{l:quasi}a this means 
$$\omega(n) \le \omega(n + n_1) - 1 \le \cdots \le \omega(n + \frac{m}{n_1}n_1) - \frac{m}{n_1}$$
However, the first and last terms are equal, so in fact $\omega(n + n_1) = \omega(n) + 1$.  
This completes the proof.  
\end{proof}

\begin{remark}\label{r:bound}
Fix a numerical monoid $\Gamma$, and let $f$ denote the quasipolynomial 
such that $f(n) = \omega(n)$ for sufficiently large $n$.  
The \emph{dissonance point} of the $\omega$-function is the maximal $n$ such that $\omega(n) \ne f(n)$.  
The proof of Theorem~\ref{t:quasi} indicates that $N_0 = n_0 + cm$ is an upper bound for the dissonance point, 
where $m = \lcm(n_1, \ldots, n_k)$, $n_0 = m \cdot \sum_{i=1}^k n_i$ and $c = \max\{c_r : 0 \leq r < m\}$, 
where 
$$c_r = \max\{|\vec b| - |\vec a| : \vec a, \vec b \in \bul(n_0 + r)\}$$
for $0 \le r < m$.  
The actual value of the dissonance point seems to vary drastically for different monoids.  
For instance, for the numerical monoid $\<10,11,15\>$, the dissonance point is 380, 
but for the numerical monoid $\<11,13,15\>$ from Example~\ref{e:3gen}, the dissonance point is 66.  
For both of these monoids, $N_0 > 10000$ and thus is significantly larger than the actual dissonance points.  

\end{remark}

We now give the following corollaries to Theorem~\ref{t:quasi}.  The first is a consequence 
of the proof of Theorem~\ref{t:quasi} and strengthens Lemma~\ref{l:quasi}b.  
The second was proved in \cite[Theorem 4.9]{andalg} for embedding dimension~2 but is generalized here to arbitrary embedding dimension.  

\begin{cor}\label{c:bul1}
For $n \in \Gamma$ sufficiently large, there exists a maximal bullet $\vec a \in \bul(n)$ with $a_1 > 0$.  
\end{cor}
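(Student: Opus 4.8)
The plan is to extract this directly from the proof of Theorem~\ref{t:quasi}, since the crucial covering statement is already established there. Retain the notation $m = \lcm(n_1, \ldots, n_k)$, $n_0 = m \cdot \sum_{i=1}^k n_i$, $c_r = \max\{|\vec b| - |\vec a| : \vec a, \vec b \in \bul(n_0 + r)\}$, $c = \max\{c_r : 0 \le r < m\}$, and $N_0 = n_0 + cm$ from that proof. Fix $n \in \Gamma$ with $n > N_0$ and write $n - n_0 = qm + r$ with $0 \le r < m$; enlarging $N_0$ by $m$ if necessary we may assume $q \ge 1$, and note also that $q \ge c \ge c_r$. Recall that the cover-map compositions $\psi_j : \bul(n_0 + r) \to \bul(n)$ jointly cover $\bul(n)$, so any maximal bullet $\vec c \in \bul(n)$ can be written $\vec c = \psi_j(\vec a)$ for some $j \le k$ and some $\vec a \in \bul(n_0 + r)$.

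First I would dispose of the easy cases. If the preimage $\vec a$ already has $a_1 > 0$, then $\vec c$ has positive first coordinate and we are done. If $j = 1$, then $\vec c = \psi_1(\vec a)$ has first coordinate $a_1 + q \cdot \tfrac{m}{n_1} \ge q \cdot \tfrac{m}{n_1} > 0$, so again we are done.

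The remaining case is $j \neq 1$ and $a_1 = 0$; this is where the argument buried in the proof of Theorem~\ref{t:quasi} does the work. By Lemma~\ref{l:quasi}b there is a bullet $\vec b \in \bul(n_0 + r)$ with $\supp(\vec b) = \{n_1\}$, so in particular $b_1 > 0$ and $|\vec a| - |\vec b| \le c_r$. Writing each of $\psi_1$ and $\psi_j$ as a chain of cover maps through the sets $\bul(n_0 + r + \ell m)$ for $0 \le \ell \le q$ and using $n_1 < n_j$ (so that each step of the $\psi_1$-chain increases length by $\tfrac{m}{n_1}$, strictly more than the $\tfrac{m}{n_j}$ gained at each step of the $\psi_j$-chain), the length gap between the two images drops by at least $1$ per step; since $q \ge c_r \ge |\vec a| - |\vec b|$, after $q$ steps we obtain $|\psi_1(\vec b)| \ge |\psi_j(\vec a)| = |\vec c| = \omega(n)$. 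On the other hand $\psi_1(\vec b) \in \bul(n)$, so Proposition~\ref{p:omega} forces $|\psi_1(\vec b)| \le \omega(n)$. Hence $\psi_1(\vec b)$ is itself a maximal bullet for $n$, and its first coordinate is $b_1 + q \cdot \tfrac{m}{n_1} > 0$, as required.

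There is essentially no obstacle here: the statement is a repackaging of a step already carried out inside the proof of Theorem~\ref{t:quasi}. The only points requiring a little care are (i) choosing ``sufficiently large'' so that $q \ge 1$ in addition to $q \ge c_r$, and (ii) reproducing the per-step length bookkeeping along the cover-map chains cleanly rather than merely gesturing at it, since the inequality $|\psi_1(\vec b)| \ge |\psi_j(\vec a)|$ is the crux of the whole argument.
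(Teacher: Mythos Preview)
Your proposal is correct and is exactly the approach the paper intends: the paper gives no separate proof of the corollary, simply declaring it ``a consequence of the proof of Theorem~\ref{t:quasi},'' and the relevant paragraph of that proof (``We first show there exists a maximal bullet for $n$ with nonzero first coordinate'') is precisely the argument you have unpacked. Your explicit case split and the per-step length bookkeeping make the extraction cleaner than the paper's own presentation, but the content is identical.
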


\begin{cor}\label{c:growth}
We have 
$$\lim_{n \to \infty} \frac{\omega(n)}{n} = \frac{1}{n_1}$$
for any numerical monoid $\Gamma$.  
\end{cor}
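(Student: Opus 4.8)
The plan is to read this off Theorem~\ref{t:quasi}, which already does all the work; the corollary is a purely formal consequence. Theorem~\ref{t:quasi} provides an explicit $N_0$ with $\omega(n+n_1) = \omega(n)+1$ for all $n \in \Gamma$ with $n > N_0$, and since $n_1 \in \Gamma$ we have $n \in \Gamma \implies n+n_1 \in \Gamma$, so the recurrence is meaningful. The first step is to iterate this recurrence inside a fixed residue class modulo $n_1$: for each $t \in \{0, \ldots, n_1-1\}$, use cofiniteness of $\Gamma$ in $\NN$ to pick the least $m_t \in \Gamma$ with $m_t \equiv t \pmod{n_1}$ and $m_t > N_0$, and then induction on $q$ gives $\omega(m_t + q n_1) = \omega(m_t) + q$ for all $q \ge 0$.

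The second step is to repackage this as a one-line estimate. Any sufficiently large $n \in \Gamma$ can be written $n = m_t + q n_1$ with $t = n \bmod n_1$ and $q \ge 0$, so the iterated recurrence gives
$$\omega(n) = \frac{1}{n_1}n + a(t), \qquad a(t) := \omega(m_t) - \frac{m_t}{n_1},$$
where $a(t)$ depends only on the residue $t$ and hence ranges over the finite set $\{a(0), \ldots, a(n_1-1)\}$. (This is just the explicit quasilinear function $f$ promised by Theorem~\ref{t:quasi}, written out, and it makes visible that the leading coefficient of $f$ is the constant $\tfrac{1}{n_1}$.) Then I would divide by $n$ to get $\omega(n)/n = \tfrac{1}{n_1} + a(t)/n$; since the values $a(t)$ are bounded while $n \to \infty$, the error term $a(t)/n$ vanishes in the limit, which is the claim.

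I do not anticipate any real obstacle: the only point deserving a moment's care is verifying that the leading coefficient of the eventual quasilinear formula for $\omega$ is a genuine constant rather than a nontrivially periodic coefficient — equivalently, that the increment $\omega(n+n_1)-\omega(n)=1$ holds uniformly across all $n_1$ residue classes — but this is exactly what the statement of Theorem~\ref{t:quasi} asserts, so the whole argument is short.
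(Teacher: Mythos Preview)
Your proposal is correct and follows essentially the same approach as the paper: both invoke Theorem~\ref{t:quasi} to write $\omega(n) = \tfrac{1}{n_1}n + a(n)$ for large $n$ with $a(n)$ periodic (hence bounded), then divide by $n$ and let the bounded term vanish. You unpack the construction of $a(n)$ more explicitly by iterating the recurrence within each residue class, but the paper's proof is just a terser version of the same argument.
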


\begin{proof}
By Theorem~\ref{t:quasi}, we can eventually write $\omega(n) = \frac{1}{n_1}n + a(n)$, 
where $a(n)$ is periodic with period at most $n_1$.  In particular, 
$a(n)$ takes on finitely many values, and thus is bounded.  
Therefore, as $n$ grows large, $\frac{a(n)}{n} \to 0$, so $\frac{\omega(n)}{n} \to \frac{1}{n_1}$, 
as desired.  
\end{proof}

\section{Embedding Dimension 2}\label{s:2gen}

In this section, we utilize the above results about bullets and the long-term behavior of the $\omega$-function to provide an explicit formula for $\omega(n)$ when $\Gamma$ has embedding dimension $2$.

\begin{example}\label{e:2gen}
Let $\Gamma = \<3,7\>$.  The following gives the omega value and maximal bullets 
for elements of $\Gamma$ between 3 and 20.  

\begin{center}
\begin{tabular}{|l||l|l|l||l|l|l||l|l|l|}
\hline
$q$ & $3q$ & $\omega$ & $\mbul$ & $3q+1$ & $\omega$ & $\mbul$ & $3q+2$ & $\omega$ & $\mbul$ \\
\hline
1 & 3 & 3 & $(0,3)$ & 4 &  &  & 5 &  &  \\
2 & 6 & 3 & $(0,3)$ & 7 & 7 & $(7,0)$ & 8 &  &  \\
3 & 9 & 3 & $(3,0),(0,3)$ & 10 & 8 & $(8,0)$ & 11 &  &  \\
4 & 12 & 4 & $(4,0)$ & 13 & 9 & $(9,0)$ & 14 & 7 & $(7,0)$ \\
5 & 15 & 5 & $(5,0)$ & 16 & 10 & $(10,0)$ & 17 & 8 & $(8,0)$ \\
6 & 18 & 6 & $(6,0)$ & 19 & 11 & $(11,0)$ & 20 & 9 & $(9,0)$ \\
\hline
\end{tabular}
\end{center}
\end{example}

Recall that in a numerical monoid $\Gamma$ with $n \in \Gamma$, the Ap\'ery set of $n$ is defined as $$\Ap(\Gamma,n) = \{m \in \Gamma : m-n \not \in \Gamma\}.$$   See~\cite{numerical} for a full treatment of Ap\'ery sets.  We include the following lemma characterizing the Ap\'ery sets of the generators of a numerical monoid with embedding dimension $2$. 

\begin{lemma}\label{l:2apery}
Let $\Gamma = \<n_1, n_2\>$ be a numerical monoid.  
Then 
$$\Ap(\Gamma,n_i) = \{an_j : 0 \le a < n_i\}$$
for $\{i,j\} = \{1,2\}$.  
\end{lemma}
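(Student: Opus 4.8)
The plan is to prove both inclusions, and the reverse inclusion is the routine one. First I would fix $\{i,j\} = \{1,2\}$ and argue that every element $an_j$ with $0 \le a < n_i$ lies in $\Ap(\Gamma, n_i)$. Indeed, $an_j \in \Gamma$ is clear, so I must check $an_j - n_i \notin \Gamma$. Any element of $\Gamma$ can be written as $xn_1 + yn_2$ with $x, y \in \NN$; if $an_j - n_i = xn_1 + yn_2$ then reducing modulo $n_i$ gives $an_j \equiv yn_j \pmod{n_i}$ (taking $j$ so that $n_j$ is the ``other'' generator), hence $a \equiv y \pmod{n_i}$ since $\gcd(n_i, n_j) = 1$ (the monoid is minimally generated, so the two generators are coprime). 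As $0 \le a < n_i$ and $y \ge 0$, the smallest nonnegative representative forces $y \ge a$, so $xn_1 + yn_2 \ge an_j$, contradicting $xn_1 + yn_2 = an_j - n_i < an_j$. Thus $an_j - n_i \notin \Gamma$, giving $\{an_j : 0 \le a < n_i\} \subseteq \Ap(\Gamma, n_i)$.

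For the reverse inclusion I would use a counting argument, which sidesteps having to characterize arbitrary elements of $\Gamma$. It is a standard fact (see \cite{numerical}) that for any $n \in \Gamma$, the Ap\'ery set $\Ap(\Gamma, n)$ has exactly $n$ elements, one in each residue class modulo $n$. So $|\Ap(\Gamma, n_i)| = n_i$. On the other hand, the set $\{an_j : 0 \le a < n_i\}$ also has exactly $n_i$ elements, since if $an_j = a'n_j$ with $0 \le a, a' < n_i$ then $a = a'$. Having already shown this $n_i$-element set is contained in the $n_i$-element set $\Ap(\Gamma, n_i)$, the two must coincide.

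The main obstacle — such as it is for a lemma of this size — is making sure the coprimality of $n_1$ and $n_2$ is invoked cleanly: it is exactly the hypothesis that $\Gamma$ is a numerical monoid (cofinite in $\NN$) with $\{n_1, n_2\}$ its minimal generating set that forces $\gcd(n_1, n_2) = 1$, and this is what makes the residue argument in the first inclusion work. An alternative to the counting argument, if one prefers to avoid quoting the cardinality of Ap\'ery sets, is to show directly that any $m \in \Ap(\Gamma, n_i)$ has the claimed form: writing $m = xn_1 + yn_2$ with $x, y \ge 0$, if $x \ge 1$ when $i \ne 1$ (i.e.\ if $n_i$ is $n_2$, so $j = 1$) then... — but this requires care about which generator is which, and the coefficient on $n_i$ in \emph{some} expression for $m$ can always be reduced to $0$ precisely because $m - n_i \notin \Gamma$ would be violated otherwise; pushing this through gives $m = an_j$, and $a < n_i$ follows again from $m - n_i \notin \Gamma$. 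Either route is short; I would present the counting version as the cleanest.
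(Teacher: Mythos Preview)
Your proof is correct and follows essentially the same strategy as the paper's: establish the inclusion $\{an_j : 0 \le a < n_i\} \subseteq \Ap(\Gamma, n_i)$ using coprimality of $n_1$ and $n_2$, then invoke $|\Ap(\Gamma, n_i)| = n_i$ to conclude equality by counting. You supply more detail in verifying $an_j - n_i \notin \Gamma$ than the paper does, but the architecture is identical.
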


\begin{proof}
Since $n_1$ and $n_2$ are relatively prime, $an_j \in \Ap(\Gamma,n_i)$ for $0 \le a < n_i$.  This gives $n_i$ distinct elements of $\Ap(\Gamma,n_i)$, and by definition $|\Ap(\Gamma,n_i)| = n_i$.  
\end{proof}

The following lemma appeared as \cite[Lemma 4.3]{andalg}.  
We state and prove it here using the terminology of this paper.  

\begin{lemma}\label{l:2pbul}
Let $\Gamma = \<n_1, n_2\>$ be a numerical monoid.  
For $n \in \Gamma$, we have 
$$\bul(n) = \{(u,0),(0,v)\} \cup \{(a_1,a_2) : n = a_1n_1 + a_2n_2\}$$
for some $u, v > 0$.  
Moreover, only $(u,0)$ and $(0,v)$ can be maximal bullets for $n$.  
\end{lemma}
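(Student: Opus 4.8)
The plan is to classify the bullets for $n$ according to their support, using Lemma~\ref{l:2apery}. There are three cases: a bullet $\vec a = (a_1, a_2)$ can have support $\{n_1\}$, support $\{n_2\}$, or full support $\{n_1, n_2\}$. First I would handle the full-support case: if $a_1, a_2 > 0$, then the bullet condition with $j = 1$ forces $(a_1 n_1 + a_2 n_2 - n_1) - n \notin \Gamma$, i.e.\ $(a_1 - 1)n_1 + a_2 n_2 \notin n + \Gamma$, and symmetrically $a_1 n_1 + (a_2 - 1)n_2 \notin n + \Gamma$. Since $(\sum a_i n_i) - n \in \Gamma$, write $(\sum a_i n_i) - n = c_1 n_1 + c_2 n_2$ for some $c_i \ge 0$; then $(a_1 - 1)n_1 + a_2 n_2 - n = (c_1 - 1)n_1 + c_2 n_2$, which lies in $\Gamma$ unless $c_1 = 0$, and likewise the $n_2$ condition forces $c_2 = 0$. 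Hence $(\sum a_i n_i) - n = 0$, so $\vec a$ is an (honest) factorization of $n$. Conversely every factorization of $n$ is trivially a bullet. This gives the $\{(a_1, a_2) : n = a_1 n_1 + a_2 n_2\}$ part.

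Next I would handle the support-$\{n_1\}$ case: a bullet of the form $(u, 0)$ with $u > 0$ must satisfy $un_1 - n \in \Gamma$ and $(u-1)n_1 - n \notin \Gamma$, which says precisely that $u$ is the minimal positive integer with $un_1 - n \in \Gamma$; such a $u$ exists and is unique by Lemma~\ref{l:quasi}(b). The support-$\{n_2\}$ case is identical, yielding a unique $(0, v)$ with $v > 0$. Combining the three cases gives the displayed equality for $\bul(n)$.

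For the ``moreover'' clause I would compare lengths: if $\vec a = (a_1, a_2)$ is a factorization of $n$ with $a_1, a_2 > 0$, then by Lemma~\ref{l:2apery}, $a_1 n_1 \notin \Ap(\Gamma, n_2)$ only when $a_1 \ge n_2$, so in fact one can always reduce a genuine factorization; more directly, applying the cover-map/length comparison of Lemma~\ref{l:quasi}(a) to $\vec a$ (which has $a_1, a_2 > 0$), together with the fact that replacing an $n_2$ by $n_1/n_2 \cdot$ appropriate multiples strictly increases length since $n_1 < n_2$, shows $|\vec a|$ is strictly less than $|(u,0)|$ where $(u,0)$ is the reduction obtained by trading all $a_2$ copies of $n_2$ for $n_1$'s. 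Concretely: from $n = a_1 n_1 + a_2 n_2$ with $a_2 > 0$ we get $n - a_1 n_1 = a_2 n_2 \in \Gamma$, so $(a_1 + a_2 n_2/n_1 \cdot \text{stuff})$... the clean argument is that since $n_2 > n_1$, the pure-$n_1$ bullet $(u,0)$ obtained via Lemma~\ref{l:quasi}(b) satisfies $u n_1 \le a_1 n_1 + a_2 n_2$ with $a_2 > 0$ giving slack, hence $u \le a_1 + a_2 n_2/n_1$, and a short estimate shows the pure bullets dominate in length any full-support factorization. Thus only $(u,0)$ and $(0,v)$ can be maximal.

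The main obstacle I expect is the length comparison in the ``moreover'' clause: one must verify carefully that trading the $n_2$-part of a factorization for $n_1$'s (to produce the pure-$n_1$ bullet) genuinely increases length, and that the resulting vector really is the minimal pure-$n_1$ bullet $(u,0)$ rather than some non-bullet — this requires knowing $u n_1 - n$ uses exactly the divisibility structure from Lemma~\ref{l:2apery}, and is where the relative primality of $n_1, n_2$ is essential. The support classification itself is routine once Lemma~\ref{l:2apery} is in hand.
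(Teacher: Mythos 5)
Your classification of $\bul(n)$ by support is correct and matches the paper's proof of the first claim; your direct argument that a full-support bullet forces $b = a_1n_1 + a_2n_2 - n = 0$ (write $b = c_1n_1 + c_2n_2$ and kill each $c_i$ with the corresponding bullet condition) is a clean, slightly more elementary substitute for the paper's observation that $b \in \Ap(\Gamma,n_1) \cap \Ap(\Gamma,n_2) = \{0\}$. The genuine gap is in the ``moreover'' clause. The central inequality you assert, $un_1 \le a_1n_1 + a_2n_2$, is false in general: by definition $u$ is minimal with $un_1 - n \in \Gamma \subseteq \NN$, so in fact $un_1 \ge n = a_1n_1 + a_2n_2$, the opposite inequality (for $\Gamma = \<3,7\>$ and $n = 7$ one has $u = 7$, so $un_1 = 21 > 7$). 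Worse, even granting your inequality, the conclusion $u \le a_1 + a_2n_2/n_1$ is an \emph{upper} bound on $u$, whereas what you need is the \emph{lower} bound $u > a_1 + a_2$; no ``short estimate'' extracts that from a bound pointing the wrong way. The surrounding remarks do not rescue the step: Lemma~\ref{l:quasi}(a) compares the two cover images of a single bullet under maps into $\bul(n+mn_i)$ and $\bul(n+mn_j)$, so it never compares a full-support factorization of $n$ against the pure bullet $(u,0)$ of the same $n$, and ``one can always reduce a genuine factorization'' is beside the point, since a factorization of $n$ already is a bullet and cannot be reduced.

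The repair is short, and it is exactly where the real content of the ``moreover'' clause lives. Since $un_1 - n \in \Gamma \subseteq \NN$, you get $un_1 \ge n = a_1n_1 + a_2n_2 > (a_1+a_2)n_1$ whenever $a_2 > 0$ (using $n_2 > n_1$), hence $u > a_1 + a_2$, so no full-support factorization is maximal; factorizations with a zero coordinate coincide with $(u,0)$ or $(0,v)$, so only those two can be maximal. The paper argues slightly differently: it writes $un_1 - n = an_2$ with $0 \le a < n_1$ via Lemma~\ref{l:2apery}, so that $un_1 = a_1n_1 + (a_2+a)n_2$, and compares lengths of factorizations of $un_1$ to get $u \ge a_1 + a_2 + a$ with strictness. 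Either route closes the gap, but as written your proposal does not prove the second claim.
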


\begin{proof}
Clearly $(u,0), (0,v) \in \bul(n)$ for some $u, v > 0$, 
so fix a bullet $(a_1, a_2) \in \bul(n)$ with $a_1, a_2 > 0$.  
Let $b = a_1n_1 + a_2n_2 - n$.  
Then $b \in \Gamma$ but $b - n_1, b - n_2 \notin \Gamma$.  
This means $b \in \Ap(\Gamma,n_1) \cap \Ap(\Gamma,n_2)$.  
But this only happens when $b = 0$, proving the first claim.  

Now, if $n = un_1$, then $un_1$ is the longest factorization of $n$, 
so the second claim follows.  
Otherwise, we have $un_1 - n \in \Ap(\Gamma,n_1)$, so 
$un_1 - n = an_2$ for some nonnegative $a < n_1$.  
Then $n + an_2 = un_1$, so for any factorization $(a_1,a_2)$ of $n$, 
$a_1 + a_2 + a \le u$, meaning $|(u,0)| > |(a_1,a_2)|$.  
This completes the proof.  
\end{proof}

We now provide a formula for the value of $\omega(n)$ in a numerical monoid of embedding dimension $2$ for $n$ sufficiently large.  

\begin{thm}\label{t:2omega}
Let $\Gamma = \<n_1, n_2\>$ be a numerical monoid.  
Fix $n$ sufficiently large.  
Write $n = qn_1 + r$ with $0 \le r < n_1$, 
and let $a \ge 0$ be minimal such that $an_1 \equiv r \bmod n_2$.  
Then we have $\omega(n) = q + a$.  
\end{thm}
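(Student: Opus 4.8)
The plan is to leverage Lemma~\ref{l:2pbul}, which tells us that for embedding dimension $2$, only the pure-power bullets $(u,0)$ and $(0,v)$ can be maximal, together with Corollary~\ref{c:bul1}, which guarantees that for $n$ sufficiently large a maximal bullet has nonzero first coordinate. Combining these, for large $n$ the bullet $(u,0)$ is maximal, so $\omega(n) = u$, where $u$ is the minimal positive integer with $un_1 - n \in \Gamma$. It remains to identify this $u$ explicitly as $q + a$.

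First I would make the identification of $u$ precise. Writing $n = qn_1 + r$ with $0 \le r < n_1$, the condition $un_1 - n \in \Gamma$ becomes $(u - q)n_1 - r \in \Gamma$. Since $r \geq 0$, we need $u \geq q$; write $u = q + t$ with $t \geq 0$, so the condition is $tn_1 - r \in \Gamma$. Now I would use Lemma~\ref{l:2apery}: the element $tn_1 - r$ (when it is a nonnegative integer congruent appropriately) lies in $\Gamma$ iff it is nonnegative and not in $\Ap(\Gamma, n_2)$ in the obstructing sense — more directly, $tn_1 \equiv r \bmod n_2$ is exactly the condition that $tn_1 - r$ is a nonnegative multiple of $n_2$ once $tn_1 \geq r$, hence in $\Gamma$. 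So I would argue: if $t$ satisfies $tn_1 \equiv r \bmod n_2$, then $tn_1 - r$ is a nonnegative integer divisible by $n_2$ (provided $tn_1 \geq r$, which holds since the minimal such $t = a$ satisfies $an_1 < n_1 n_2$ so $an_1 - r$ could a priori be negative only if $a = 0$ and $r > 0$; but then $r \equiv 0 \bmod n_2$ with $0 \le r < n_1$, and since $\gcd(n_1,n_2)=1$ and $n_1 < n_2$ forces $r = 0$, handled separately), hence $tn_1 - r \in \Gamma$. Conversely, if $tn_1 - r \in \Gamma$, then in particular $tn_1 - r \geq 0$ and $tn_1 - r = \alpha n_1 + \beta n_2$; reducing mod $n_2$ gives $tn_1 \equiv r + \alpha n_1 \bmod n_2$, and one shows $\alpha$ can be taken $0$, or more cleanly that any $t' < a$ fails. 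The minimal $t$ is then exactly $a$, giving $u = q + a$ and $\omega(n) = q + a$.

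The cleanest route for the converse is to observe that the set of $t \geq 0$ with $tn_1 - r \in \Gamma$, among those $t$ with $tn_1 \geq r$, is precisely $\{t : tn_1 \equiv r \bmod n_2\}$, because for a nonnegative integer $x$ with $x \equiv 0 \bmod n_2$ we automatically have $x \in \Gamma$, while $un_1 - n$ for the minimal valid $u$ lies in $\Ap(\Gamma, n_1)$, hence by Lemma~\ref{l:2apery} equals $an_2$ for some $0 \le a < n_1$, forcing $un_1 - n = an_2$ and thus $u n_1 - n \equiv 0 \bmod n_2$; unwinding, $un_1 \equiv n \equiv r \bmod n_2$, and minimality of $u$ over valid values transfers to minimality of $a$ over $\{a \geq 0 : an_1 \equiv r \bmod n_2\}$ once we know $u \geq q$, i.e. $t = u - q \geq 0$. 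I would also need "$n$ sufficiently large" precisely so that Corollary~\ref{c:bul1} applies; I would remark that the threshold can be extracted from Theorem~\ref{t:quasi}.

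The main obstacle I anticipate is the careful bookkeeping in the converse direction: ruling out that some $t < a$ could give $tn_1 - r \in \Gamma$ via a genuinely mixed factorization $\alpha n_1 + \beta n_2$ with $\alpha > 0$. The resolution is that $un_1 - n$ is minimal nonnegative with $un_1 - n \in \Gamma$, hence $un_1 - n - n_1 \notin \Gamma$, i.e. $un_1 - n \in \Ap(\Gamma, n_1)$, so Lemma~\ref{l:2apery} pins it down as a pure multiple of $n_2$ with coefficient below $n_1$ — this is exactly what makes the modular characterization tight and forces $a < n_1$, matching the statement. Edge cases ($r = 0$, or $n$ itself a multiple of $n_1$) should be dispatched directly since then $a = 0$ and $\omega(n) = q$.
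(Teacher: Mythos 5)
Your proposal is correct in substance and, for the key step, takes a genuinely different route from the paper. To see that $(u,0)$ rather than $(0,v)$ is the maximal bullet for large $n$, you invoke Corollary~\ref{c:bul1} (a maximal bullet with nonzero first coordinate exists for large $n$) together with Lemma~\ref{l:2pbul} (only $(u,0)$ and $(0,v)$ can be maximal); since the mixed bullets are excluded by the lemma, the maximal bullet with positive first coordinate must be $(u,0)$. The paper instead proves this directly: it tracks $u(n)$ and $v(n)$ under the covering maps, getting $u(n+n_1n_2)=u(n)+n_2$ and $v(n+n_1n_2)=v(n)+n_1$, and shows $u(n)-v(n)\ge 0$ once $n>cn_1n_2$. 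Your route is shorter, but the paper's computation is what produces the explicit (and much smaller) dissonance bound $cn_1n_2$ recorded in Remark~\ref{r:ed2bound}; citing Corollary~\ref{c:bul1} only gives the threshold $N_0$ from Theorem~\ref{t:quasi}. The second step --- identifying $u=q+a$ by noting that the minimal $t\ge 0$ with $tn_1-r\in\Gamma$ forces $tn_1-r\in\Ap(\Gamma,n_1)$, hence a multiple of $n_2$ by Lemma~\ref{l:2apery} --- is essentially the paper's argument, and your Ap\'ery-set justification of the minimality transfer is actually more explicit than the paper's.

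Two slips to repair, neither fatal since your $t=u-q$ framing carries the proof. First, the chain ``$un_1\equiv n\equiv r\bmod n_2$'' is false: $n=qn_1+r$ gives $n\equiv qn_1+r$, not $r$, modulo $n_2$ (e.g.\ in $\langle 3,7\rangle$ with $n=10$ one has $n\equiv 3$ but $r=1$). The correct statement is $(u-q)n_1\equiv r\bmod n_2$, which is what you need. Second, ``forces $a<n_1$'' conflates the theorem's $a$ (the coefficient of $n_1$, i.e.\ your $t$, which satisfies $0\le a<n_2$) with the Ap\'ery coefficient of $n_2$ in $un_1-n=\beta n_2$, which is what Lemma~\ref{l:2apery} bounds by $n_1$; indeed in $\langle 3,7\rangle$ with $r=1$ one gets $a=5>n_1$. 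Neither claim is needed for the conclusion, so simply delete or correct them.
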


\begin{proof}
For any $n \in \Gamma$, let $u(n)$ and $v(n)$ denote the values 
such that $(u(n),0), (0,v(n)) \in \bul(n)$.  
By applying covering maps, we see that 
$u(n + n_1n_2) = u(n) + n_2$ and $v(n + n_1n_2) = v(n) + n_1$.  
Let 
$c = \max(\{0\} \cup \{v(n_1n_2 + t) - u(n_1n_2 + t) : 0 \le t < n_1n_2\}),$
fix $n > cn_1n_2$, and write $n = sn_1n_2 + t$ with $0 \le t < n_1n_2$.  
Then 
\begin{eqnarray*}
u(n) - v(n) &=& u(sn_1n_2 + t) - v(sn_1n_2 + t) \\
&=& (s - 1)(n_2 - n_1) + u(n_1n_2 + t) - v(n_1n_2 + t) \\
&\ge& c(n_2 - n_1) - (v(n_1n_2 + t) - u(n_1n_2 + t)) \ge c(n_2 - n_1) - c \ge 0
\end{eqnarray*}
so by Lemma~\ref{l:2pbul}, $(u(n),0)$ is a maximal bullet for $n$, meaning $\omega(n) = u$.  
Write $n = qn_1 + r$ with $0 \le r < n_1$, 
and let $a \ge 0$ be minimal such that $an_1 \equiv r \bmod n_2$.  
Then  
$$(a + q)n_1 - n = an_1 + qn_1 - n = an_1 - r \in \<n_2\>$$
and by minimality of $a$, $(a + q - 1)n_1 - n = (an_1 - r) - n_1 \notin \Gamma$, 
so in fact $u = a + q$.  
\end{proof}

\begin{remark}
The authors of \cite{andalg} also provide a formula for the $\omega$-values in a numerical monoid of embedding dimension $2$.  Their formula requires an explicit computation for each $n \in \Gamma$, whereas Theorem~\ref{t:2omega} only requires $n_1$ computations to find all $\omega$-values for large $n$.
\end{remark}

\begin{remark}\label{r:ed2bound}
Theorem~\ref{t:2omega} improves the bound on the dissonance point for 
2-generated numerical monoids to $cn_1n_2$ with
$$c = \max(\{0\} \cup \{v(n_1n_2 + t) - u(n_1n_2 + t) : 0 \le t < n_1n_2\})$$
where $u(n)$ and $v(n)$ denote the unique values 
such that $(u(n),0), (0,v(n)) \in \bul(n)$, as guaranteed by Lemma~\ref{l:2pbul}.  
\end{remark}

\begin{example}
Let $\Gamma = \<3,7\>$ as defined in Example~\ref{e:2gen}.  
For $n \in \Gamma$ and writing $n = 3q + r$ for $0 \le r < 3$, 
Theorem~\ref{t:2omega} allows us to write $\omega(n) = q + a(r)$, where 
$$\omega(n) = \left\{\begin{array}{cc}
q + 0 & \text{ for } r = 0 \\
q + 5 & \text{ for } r = 1 \\
q + 3 & \text{ for } r = 2
\end{array}\right.$$
and by the table in Example~\ref{e:2gen}, 
we see this holds for $n \ge 7$.  
\end{example}


\section{Future Work}\label{s:futurework}

In all of the examples of $\Gamma$ we have computed, 
the eventual period of the $\omega$-function is the smallest generator.  
We record this here.  

\begin{conj}
For any numerical monoid $\Gamma$ with least generator $n_1$.  
For $n$ sufficiently large, $\omega(n)$ is quasilinear with period exactly $n_1$.  
\end{conj}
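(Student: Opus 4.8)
The plan is to deduce the conjecture from Theorem~\ref{t:quasi} together with the integrality of $\omega$, via an elementary argument on the leading coefficient of the eventual quasilinear function; I do not expect a genuine obstacle here, since all of the substantive work is already carried out by Theorem~\ref{t:quasi}.

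First I would use Theorem~\ref{t:quasi} in the following form: for all sufficiently large $n$, the function $\omega$ agrees with a quasilinear function whose period $s$ divides $n_1$, and $\omega(n+n_1) = \omega(n)+1$. On each residue class $\sigma$ modulo $s$ write $\omega(n) = \alpha_\sigma n + \beta_\sigma$ for all large $n \equiv \sigma \pmod{s}$. Since $s \mid n_1$, the elements $n$ and $n+n_1$ lie in the same class modulo $s$, so $\omega(n+n_1) = \omega(n) + \alpha_\sigma n_1$; comparing with $\omega(n+n_1) = \omega(n)+1$ forces $\alpha_\sigma = 1/n_1$ for every $\sigma$. Hence $\omega$ has degree exactly $1$ on each class, and $a(n) := \omega(n) - n/n_1$ is eventually constant on each residue class modulo $s$; in particular $a$ is eventually periodic with some minimal period $p$ dividing $n_1$, and $p$ is precisely the period of the quasilinear function $\omega$, its linear coefficient being the constant $1/n_1$.

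It then remains to rule out $p < n_1$, and this is the only new observation required: if $n, n'$ are large with $a(n) = a(n')$, then $\omega(n) - \omega(n') = (n-n')/n_1 \in \ZZ$, which forces $n_1 \mid (n-n')$. Thus $a$ takes pairwise distinct values on distinct residue classes modulo $n_1$; since $a$ has period $p \mid n_1$ it factors through $\ZZ/p\ZZ$, and this is impossible unless $p = n_1$. Therefore the period of $\omega$ is exactly $n_1$ — indeed the argument shows the eventual period is always exactly $n_1$, matching the computational evidence. The one point needing care is bookkeeping about what ``period'' means: it should be read as the least $s$ for which $\omega$ eventually coincides with a polynomial on every residue class modulo $s$, so that it is an invariant of the function rather than an artifact of a chosen presentation $\omega(n) = \sum_i a_i(n) n^i$; once that convention is fixed, the argument above is complete.
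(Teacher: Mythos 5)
This statement is left as an open conjecture in the paper, so there is no proof of it to compare against; I can only assess your argument on its own terms. As a matter of logic it is correct, and it does prove the conjecture under the paper's literal definition of period: Theorem~\ref{t:quasi} forces the leading coefficient to equal the constant $1/n_1$ on every residue class, and then the integrality of $\omega$ makes the constant term $a(n) = \omega(n) - n/n_1$ take $n_1$ pairwise distinct values (they have distinct fractional parts) on the $n_1$ residue classes, so its minimal period cannot be a proper divisor of $n_1$. Your attention to what ``period'' means is also the right instinct, since the decomposition $\omega(n)=a_1(n)n+a_0(n)$ is in fact forced once the $a_i$ are required to be periodic.

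The caveat you should state explicitly is that this resolution is degenerate: the period $n_1$ is produced entirely by the fractional part of the leading term $n/n_1$ and uses no information about $\Gamma$ beyond Theorem~\ref{t:quasi}, which is presumably not what the authors are probing with their computations. The substantive reading of the conjecture concerns the integer-valued periodic part $b(n) = \omega(n) - \lfloor n/n_1 \rfloor$ (the function $r\mapsto a$ of Theorem~\ref{t:2omega}): does $b$ always have minimal period exactly $n_1$, or can it factor through $\ZZ/d\ZZ$ for a proper divisor $d$ of $n_1$? Your integrality argument says nothing about this --- for instance, it is consistent with $\omega(n) = \lfloor n/n_1\rfloor + C$ for all large $n$, a scenario in which the literal conjecture holds but the integer periodic structure is trivial --- so in that stronger sense the question remains open. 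I would therefore present your observation as showing that the conjecture as literally stated is an immediate consequence of Theorem~\ref{t:quasi}, and then isolate the genuinely open statement in terms of $b$.
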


Another question concerns a more precise bound on the dissonance point.  

\begin{prob}
Fix a numerical monoid $\Gamma$, and let $f$ denote the quasipolynomial 
such that $f(n) = \omega(n)$ for sufficiently large $n$.  
Characterize the maximal $n$ such that $\omega(n) \ne f(n)$.  
\end{prob}


Corollary~\ref{c:bul1} states that each sufficiently large $n$ 
has a maximal bullet whose support contains $n_1$.  
Thus, a related question involves characterizing the maximal bullets of large $n \in \Gamma$.  
In many examples (for instance, when $\Gamma$ have embedding dimension 2), 
for large $n$, the bullet with support $\{n_1\}$ is maximal.  
However, this is not always the case.  For instance, when $\Gamma = \<4,5,6\>$ and
$n$ is sufficiently large with $n \equiv 0 \bmod 4$, every maximal bullet $\vec a \in \mbul(n)$
satisfies $a_2 = 2$.  

\begin{prob}
Characterize the maximal bullets for numerical semigroups $\Gamma$ 
with embedding dimension greater than 2.  
\end{prob}

Our initial efforts to prove Theorem~\ref{t:quasi} 
involved attempts to construct a graded ring $R$ and a graded $R$-module $M$ 
whose Hilbert function $\mathcal H(M;-)$ takes on the values of the $\omega$-function.  
Prior to finding such a construction, the direct proof given in this paper was obtained.  
If a proof using Hilbert functions were found, the resulting construction would give 
a combinatorial algebraic interpretation of the $\omega$-function.  
See \cite{cca} for more on the Hilbert function.  


\begin{prob}\label{p:hilbert}
Given $\Gamma$, can one construct a graded ring $R$ and graded $R$-module $M$
(both depending on $\Gamma$) for which $\mathcal H(M;n) = \omega(n)$ for $n \in \Gamma$?  
\end{prob}

\section{Acknowledgements}

Much of the work was completed during the Pacific Undergraduate Research Experience in Mathematics (PURE Math), which was funded by National Science Foundation grants DMS-1045147 and DMS-1045082 and a supplementary grant from the National Security Agency.  The authors would like to thank Scott Chapman and Ezra Miller for numerous helpful conversations,  as well as Emelie Curl, Staci Gleen, and Katrina Quinata for their initial observations on $\omega$-functions.


\end{document}